\newtheorem{Thm}{Theorem}[section]
\theoremstyle{definition}
\newtheorem{Theorem}[Thm]{Theorem}
\newtheorem{Lemma}[Thm]{Lemma}
\theoremstyle{remark}
\newtheorem{Remark}{Remark}
\font\sy=cmsy10
\font\ym=msbm10
\newcommand{\C}{\text{\ym C}}
\newcommand{\cL}{{\hbox{\sy L}}}
\newcommand{\cM}{{\hbox{\sy M}}}
\newcommand{\sB}{\mathscr B}
\newcommand{\Hom}{\hbox{\rm Hom}}
\newcommand{\End}{\hbox{\rm End}}
\title[bicategory of bimodules]{Notes on the bicategory of W*-bimodules}
\author{Yusuke Sawada \and Shigeru Yamagami}
\begin{document}
\maketitle
\begin{center}
Graduate School of Mathematics
\end{center}
\begin{center}
Nagoya University 
\end{center}
\begin{center} 
Nagoya, 464-8602, JAPAN
\end{center}    


\begin{abstract}
Categories of W*-bimodules are shown in an explicit and algebraic way 
to constitute an involutive W*-bicategory. 
\end{abstract} 

\section*{Introduction}
As in the purely algebraic case, it is fairly obvious to extract a bicategory from 
operator-algebraic bimodules provided that the relevant monoidal structure is based on 
ordinary module tensor products. 

Although it needs an operator-algebraic modification 
to have natural tensor products (see \cite{conn}, \cite{sauv})  , 
we know that W*-bimodules (i.e., Hilbert spaces with von Neumann algebras acting continuously) 
still supply a bicategory of W*-algebraic nature, called a W*-bicategory (see \cite{yama2}, \cite{yama4}). 

In the present notes, we shall show that the whole construction of the bicategory in question 
as well as the accompanied involution is possible in an explicit and algebraic manner
without detailed knowledge of modular theory. 







The organization is as follows: 
Related with W*-bimodules, we introduce two W*-bicategories $\cM^\leftthreetimes$, $\cM^\rightthreetimes$  
and show that these are monoidally equivalent based on the unit object property of 
standard W*-bimodules. 

We then notice the fact that the operation of taking dual bimodules gives an anti-multiplicative 
equivalence between 
$\cM^\leftthreetimes$ and $\cM^\rightthreetimes$, 
which is utilized to get involutions on $\cM^\leftthreetimes$ and $\cM^\rightthreetimes$ respectively so that 
the monoidal equivalence between these preserves involutions as well. 

In this way, we have a single W*-bicategory with involution, 
which recovers the one dealt with in \cite{yama2}.

\section{Preliminaries}
\subsection{Bicategories}
In this paper, a linear bicategory is simply referred to as a bicategory 
(see \cite{macl} for categorical backgrounds). 
Thus our bicategory is a kind of categorification of linear algebra and consists of a family 
of linear categories ${}_A\cL_B$ indexed by a pair $(A,B)$ of labels with the following information: 
\begin{itemize}
\item
A special object $I_A$ (called a unit object) in ${}_A\cL_A$ is assigned to each label $A$. 
\item
A bivariant functor $(\cdot)\otimes (\cdot):{}_A\cL_B\times {}_B\cL_C \to {}_A\cL_C$ is assigned to
each triplet $(A,B,C)$ of labels. 
\item 
Isomorphisms (called unit isomosphisms) 
$l_X: I_A\otimes X \to X$ and $r_X:X\otimes I_B \to X$ are assigned to each object $X$ in ${}_A\cL_B$. 
\item 
An isomorphism 
(called associativity isomorphism) $a_{X,Y,Z}:(X\otimes Y)\otimes Z \to X\otimes (Y\otimes Z)$ is assigned to 
each triplet $(X,Y,Z)$ which is admissible in the the sense that 
$X \in {}_A\cL_B$, $Y \in {}_B\cL_C$ and $Z \in {}_C\cL_D$ for some labels $A,B,C,D$. 
\end{itemize}
These are then required to satisfy the following conditions: 
\begin{enumerate}
\item
$l_X$ and $r_X$ are natural in $X$ and satisfy the triangle identity 
in the sense that they make the following triangular diagrams commutative. 
\item 
$a_{X,Y,Z}$ is natural in $X$, $Y$, $Z$ and satisfies the pentagon identity in the sense that 
it makes the following pentagonal diagrams commutative. 
\end{enumerate}

\[
\begin{xy}
(0,28) *{(X\otimes I)\otimes Y}="A",
(30,28) *{X\otimes (I\otimes Y)}="B",
(15,8) *{X\otimes Y}="C",
(0,0) *{}="D",
\ar "A";"B"
\ar "A";"C"
\ar "B";"C"
\end{xy}
\hspace{-6mm}
\begin{xy}
(23,30) *{(W\otimes X)\otimes (Y\otimes Z)}="A", 
(0,15) *{((W\otimes X)\otimes Y)\otimes Z}="B", 
(46,15) *{W\otimes (X\otimes (Y\otimes Z))}="C", 
(0,0) *{(W\otimes (X\otimes Y))\otimes Z}="D",
(46,0) *{W\otimes ((X\otimes Y)\otimes Z)}="E", 
\ar "B";"A"
\ar "A";"C"
\ar "B";"D"
\ar "D";"E"
\ar "E";"C"
\end{xy}
\]

If a bicategory consists of C*-categories (or W*-categories) 
and all the relevant morphisms are unitary, it is called 
a C*-bicategory (or a W*-bicategory). See \cite{glr} (cf.~also \cite{yama4}) 
for more information on operator categories. 

By an {\bf involution} on a bicategory $\cL$, 
we shall mean a family of contravariant 
functors ${}_A\cL_B \to {}_B\cL_A$, 
which we denote by 
\[ 
X \mapsto X^*, 
\quad 
\Hom(X,Y) \ni f \mapsto {}^tf \in \Hom(Y^*,X^*)
\]
for objects $X$, $Y$ in ${}_A\cL_B$,  
together with natural families of isomorphisms 
$\{ c_{X,Y}: Y^*\otimes X^* \to (X\otimes Y)^*\}$ (anti-multiplicativity) 
and 
$\{ d_X: X \to (X^*)^*\}$ (duality) 
making the following diagrams commutative 
\[
\begin{CD}
(X^*\otimes Y^*)\otimes Z^* @>{c\otimes 1}>> 
(Y\otimes X)^*\otimes Z^* @>{c}>> 
(Z\otimes(Y\otimes X))^*\\
@V{a}VV & & @VV{{}^ta}V\\
X^*\otimes(Y^*\otimes Z^*) @>>{1\otimes c}> 
X^*\otimes(Z\otimes Y)^* @>>{c}> 
((Z\otimes Y)\otimes X)^*
\end{CD}\ ,
\]
\[
\begin{CD}
X\otimes Y @>{d\otimes d}>> X^{**}\otimes Y^{**}\\
@V{d}VV @VV{c}V\\
(X\otimes Y)^{**} @>>{{}^tc}> (Y^*\otimes X^*)^*
\end{CD}\ ,
\]
and fulfilling ${}^td_X = d^{-1}_{X^*}: X^{***} \to X^*$.
(The naturality means ${}^t(f\otimes g) \stackrel{c}\sim {}^tg\otimes{}^tf$ and  
$f \stackrel{d}\sim {}^t({}^tf)$.) 

We remark here that the operation $(X \mapsto X^*, f \mapsto {}^tf)$ 
together with $c$ satisfying anti-multiplicativity is
an anti-monoidal functor and we see that $f \mapsto {}^t({}^tf)$ gives 
a monoidal functor with the multiplicativity 
\[
{}^tc^{-1}\circ c: X^{**}\otimes Y^{**} \to (Y^*\otimes X^*)^* 
\to (X\otimes Y)^{**}.
\]

In literature, our involution is named in various ways; it is referred to as, for example, 
having duals in \cite{bw} with extra conditions assumed in connection with unit objects, 
which turns out to be redundant. 

For C*-bicategories (especially for W*-bicategories), it is natural to
assume the compatibility with the *-operation on morphisms as studied in \cite{yama3}; 
all the relevant structural isomorphisms 
are assumed to be unitary and the operation $f \mapsto {}^tf$ satisfies ${}^t(f^*) = ({}^tf)^*$. 
To avoid the confusion in this situation, we have used the different symbols $X^*$ and ${}^tf$ to
denote a single functor. (Other remedy is to use ${}^tX$ for objects, 
which looks however apparently awkward.)

\subsection{Bimodules}
We here review relevant facts from \cite{yama2} (cf.~\cite{bdh} also). 
Let $A$ and $B$ be W*-algebras. 
By an ${}_A\text{W*}_B$ bimodule $X$, we shall mean a Hilbert space $X$ 
on which W*-algebras $A$ and $B$ are are normally (i.e., weak-continuously) represented 
in an $A$-$B$ bimodule fashion. 
We often write ${}_AX_B$ to indicate the acting algebras. 
Given ${}_A\text{W*}_B$ bimodules $X$ and $Y$, the Banach space of bounded 
$A$-$B$ linear maps of ${}_AX_B$ into ${}_AY_B$ is denoted by $\Hom(X,Y)$. 
With these as hom-sets, we have the W*-category of ${}_A\text{W*}_B$ bimodules, which 
is denoted by ${}_A\cM_B$ in what follows. We regard a left W*-$A$ module (resp.~a right W*-$B$ module) as 
an ${}_A\text{W*}_B$ bimodule for $B=\C$ (resp.~for $A = \C$). 

The so-called standard representation (space) of a W*-algebra $A$ (\cite{haag1})
is nothing but the regular representation of $A$ and denoted by $L^2(A)$ in this paper. 
Recall that $L^2(A)$ is an ${}_A\text{W*}_A$ bimodule, 
which is linearly spanned by symbols $\varphi^{1/2}$ ($\varphi \in A_*^+$) so that 
the reduced left GNS space $\overline{[\varphi]A\varphi^{1/2}}$ is identified with 
the reduced right GNS space $\overline{\varphi^{1/2}A[\varphi]}$ by 
$J_\varphi(a\varphi^{1/2}) = (a\varphi^{1/2})^\natural = \varphi^{1/2} a^*$ for $a \in [\varphi]A[\varphi]$. 
Here $[\varphi]$ denotes the support projection of $\varphi$, 
$J_\varphi$ stands for the modular conjugation associated to the GNS vector $\varphi^{1/2}$ of 
the reduced algebra $[\varphi]A[\varphi]$ and the canonical *-operation, 
which is designated by $\natural$ here, is well-defined 
on the whole $L^2(A)$. Note that $\varphi$ is faithful when restricted to 
$[\varphi]A[\varphi]$. See \cite{yama1} and \cite{yama5} for further information. 

For a W*-bimodule ${}_AX_B$, we write
\[ 
A(-1/2)X = \Hom({}_AL^2(A),{}_AX)^\circ, 
\quad  
XB(-1/2) = \Hom(L^2(B)_B,X_B)
\] 
with the obvious operations of these on $X$ by right and left multiplications respectively,
which are $A$-$B$ bimodules by 
$\alpha(afb) = ((\alpha a)f)b$ and $(agb)\beta = a(g(b\beta))$ for 
$f \in A(-1/2)X$, $g \in XB(-1/2)$, $a \in A$, $b \in B$, $\alpha \in L^2(A)$ and $\beta \in L^2(B)$. 
(The circle for opposite algebra is placed in the definition of $A(-1/2)X$ 
to indicates that it acts on $X$ from the right.) 
Moreover, with this convention, we introduce an $A$-valued inner product ${}_A[\ ,\ ]$ on $A(-1/2)X$ and 
a $B$-valued inner product $[\ ,\ ]_B$ on $XB(-1/2)$ by 
\[ 
\alpha ({}_A[f',f]) = (\alpha f')f^*, 
\quad 
([g,g']_B)\beta = g^*(g'\beta). 
\]
Here $f,f' \in A(-1/2)X$ and $g,g' \in XB(-1/2)$. 
Note that $L^2(B)B(-1/2) = \End(L^2(B)_B)$ is equal to $B$, 
whereas $A(-1/2) L^2(A) = \End({}_AL^2(A))^\circ$ is indetified with $A$ by the right action of $A$. 
Notice that ${}_A[a'f',af] = a'{}_A[f',f]a^*$ and $[gb,gb']_B = b^* [g,g']_B b'$ 
for $a,a' \in A$ and $b, b' \in B$. 

Given index sets $I,J$, we introduce a matrix extension of a W*-algebra $N$ by 
$M_{I,J}(N) = \Hom(\ell^2(J)\otimes L^2(N)_N, \ell^2(I)\otimes L^2(N)_N)$, which is identified with 
a subspace of bounded $N$-valued matricial sequences in $N^{I\times J}$ and each $(x_{i,j}) \in M_{I,J}(N)$ 
is approximated by its finitely supported cuts in any weaker operator topology. 
Note that $M_I(N) = M_{I,I}(N)$ is a von Neumann algebra on $\ell^2(I)\otimes L^2(N)$ and 
weaker operator topologies are well-defined on 
$M_{I,J}(N)$ as a corner subspace of $M_{I\sqcup J}(N)$. 

The $L^2$-version of matrix extension is introduced analogously: 
A matrix extension (a Hilbert-Schmidt extension) of a W*-bimodule ${}_AX_B$ is defined by 
\[ 
{}^IX^J = \{ (\xi_{i,j}); \xi_{i,j} \in X, \sum_{i \in I, j \in J} \| \xi_{i,j}\|^2 < \infty\},  
\] 
which is an ${}_{M_I(A)}\text{W*}_{M_J(B)}$ bimodule in an obvious way. 

Recall that unilateral W*-modules are projective in the sense that 
we can find an index set $I$ and a projection $e \in M_I(A)$ so that 
${}_AX \cong {}_AL^2(A)^{I}\,e$ or $X_A \cong e\,{}^IL^2(A)_A$. 
This is nothing but a paraphrase of the Dixmier's structure theorem on normal 
*-homomorphisms between von Neumann algebras. 

\section{W*-bicategories of W*-bimodules}


As observed in \cite{yama2} from the view point of modular algebras, 
W*-bimodules constitute an involutive W*-bicategory, which we shall reconstruct here based on  
operator-valued inner products. 
According to two possibilities of them, there are two ways of forming tensor product bimodules, 
which are discriminatingly denoted by 
\[ 
X\leftthreetimes Y = \overline{XB(-1/2)\otimes_B Y}, 
\quad 
X\rightthreetimes Y = \overline{X\otimes_B B(-1/2)Y}. 
\]
Here algebraic module tensor products 
$XB(-1/2)\otimes_BY$ and $X\otimes_B B(-1/2)Y$ are pre-Hilbert spaces 
with their inner products defined by 
\[ 
(x\otimes_B\eta|x'\otimes_B\eta') = (\eta|[x,x']_B \eta'), 
\quad 
(\xi\otimes_By|\xi'\otimes_B y') = (\xi|\xi' {}_B[y',y])
\]
respectively and the bar denotes the Hilbert space completion. 

Given morphisms $f:{}_AX_B \to {}_AX'_B$ and $g:{}_BY_C \to {}_BY'_C$, bounded $A$-$C$ linear maps 
$f\leftthreetimes g: X\leftthreetimes Y \to X'\leftthreetimes Y'$ and 
$f\rightthreetimes g: X\rightthreetimes Y \to X'\rightthreetimes Y'$ are well-defined by 
\[ 
(f\leftthreetimes g)(x\otimes_B \eta) = (fx)\otimes_B (g\eta), 
\quad 
(f\rightthreetimes g)(\xi \otimes_B y) = (f\xi)\otimes_B (gy).  
\] 
Here $gy \in B(-1/2)Y$ is defined by $\beta (gy) = g(\beta y)$ ($\beta \in L^2(B)$). 

Unit isomorphisms are defined by 
\begin{align*} 
l^\leftthreetimes_{A,B}: L^2(A)\leftthreetimes X 
&= \overline{A\otimes_AX} \ni a\otimes_A\xi \mapsto a\xi \in X,\\
r^\leftthreetimes_{A,B}: X \leftthreetimes L^2(B)
&= \overline{XB(-1/2)\otimes_B L^2(B)} \ni x\otimes_B \beta \mapsto x\beta \in X 
\end{align*}
and 
\begin{align*} 
l^\rightthreetimes_{A,B}: L^2(A)\rightthreetimes X 
&= \overline{L^2(A)\otimes_A A(-1/2)X} \ni \alpha \otimes_A x \mapsto \alpha x \in X,\\
r^\rightthreetimes_{A,B}: X \rightthreetimes L^2(B)
&= \overline{X\otimes_B B} \ni \xi\otimes_B b \mapsto \xi b \in X.  
\end{align*}

To introduce associativity isomorphisms, we remark that the algebraic module tensor product 
$XB(-1/2)\otimes_B YC(-1/2)$ is canonically embedded into $(X\leftthreetimes Y)C(-1/2)$ 
in such a way that 
$(XB(-1/2)\otimes_B YC(-1/2))\otimes_C Z$ is dense in 
$(X\leftthreetimes Y)C(-1/2)\otimes_C Z$. 
Likewise, we have a canonical embedding $B(-1/2)Y\otimes_C C(-1/2)Z \subset 
B(-1/2)(Y\rightthreetimes Z)$ so that 
$X\otimes_B B(-1/2)Y\otimes_C C(-1/2)Z$ is dense in $X\rightthreetimes (Y\rightthreetimes Z)$. 
Associativity isomorphisms are now defined by 
\[ 
a^\leftthreetimes_{X,Y,Z}: 
(X\leftthreetimes Y)\leftthreetimes Z \ni (x\otimes_B y)\otimes_C \zeta 
\mapsto x \otimes_B (y\otimes_C \zeta) \in X\leftthreetimes (Y\leftthreetimes Z) 
\]
for $x \in XB(-1/2)$, $y \in YC(-1/2)$ and $\zeta \in Z$. 
\[ 
a^\rightthreetimes_{X,Y,Z}: 
(X\rightthreetimes Y)\rightthreetimes Z \ni (\xi\otimes_B y)\otimes_C z 
\mapsto \xi \otimes_B (y\otimes_C z) \in X\rightthreetimes (Y\rightthreetimes Z) 
\]
for $\xi \in X$, $y \in B(-1/2)Y$ and $z \in C(-1/2)Z$. 

The pentagon identity on a quadruple product $W\leftthreetimes X \leftthreetimes Y \leftthreetimes Z$ 
then follows from that on $WA(-1/2)\otimes_A XB(-1/2)\otimes_B YC(-1/2)\otimes_C Z$. 
Similarly for the $\rightthreetimes$ product. 

The triangle identities for unit isomorphisms are also witnessed on dense subspaces. 
For $X\leftthreetimes L^2(B) \leftthreetimes Y$, this is reduced to the commutativity of 
the diagram  
\[
\begin{CD}
XB(-1/2)\otimes_B B \otimes_B Y @>>> X\leftthreetimes (L^2(B) \leftthreetimes Y)\\
@VVV @VVV\\
(X\leftthreetimes L^2(B)) \leftthreetimes Y @>>> X\leftthreetimes Y
\end{CD}\ , 
\]
which is traced by 
\[ 
\begin{CD}
x\otimes_B b\otimes_B \eta @>>> x\otimes_B (b\otimes_B \eta)\\
@VVV @VVV\\
(x\otimes_B b)\otimes_B \eta @>>> xb\otimes_B \eta = x\otimes_B b\eta
\end{CD} 
\] 
for $x \in XB(-1/2)$, $b \in B$ and $\eta \in Y$. 

In this way, we have two W*-bicategories of W*-bimodules, which are denoted by 
$\cM^\leftthreetimes$ and $\cM^\rightthreetimes$ from here on.

\begin{Remark}
In \cite{take}, the associativity isomorphism is captured as 
$(X\leftthreetimes Y) \rightthreetimes Z \cong X\leftthreetimes (Y \rightthreetimes Z)$ 
in our notation.
Although $X \leftthreetimes Y \cong X \rightthreetimes Y$ in a canonical way 
(see below, cf.~\cite{conn}, \cite{sauv} also), the existence of these isomorphisms 
does not automatically mean the coherence for quadruple tensor products. 
\end{Remark}

\section{Canonical Equivalence}
Two W*-bicategories of W*-bimodules are now shown to be canonically equivalent. 
This is recognized in \cite{yama2} through natural identifications in modular tensor products. 
Here we shall establish this by constructing an explicit functor of equivalence. 

We first observe how tensor products behave under matrix extensions. Consider 
W*-bimodules ${}_AX_B$, ${}_BY_C$ and their column and row extensions 
${}^IX_B$ and ${}_BY^J$ by index sets $I,J$. 
Then $({}^IX) \leftthreetimes (Y^J)$ and $({}^IX) \rightthreetimes (Y^J)$ are naturally 
identified with Hilbert-Schmidt extensions 
${}^I(X\leftthreetimes Y)^J$ and ${}^I(X\rightthreetimes Y)^J$ 
of $X\leftthreetimes Y$ and $X\rightthreetimes Y$ respectively. 
Note that algebraic sums $\oplus_IXB(-1/2)$ and $B(-1/2)Y^{\oplus J}$ are weakly dense 
in $({}^IX)B(-1/2)$ and $B(-1/2)(Y^J)$ respectively. 

When this observation is applied to the standard bimodule $L^2(B)$, 
the unit isomorphisms $l^\leftthreetimes = r^\leftthreetimes:L^2(B)\leftthreetimes L^2(B) \to L^2(B)$ and 
$l^\rightthreetimes = r^\rightthreetimes: L^2(B)\rightthreetimes L^2(B) \to L^2(B)$ 
are enhanced to $M_I(B)$-$M_J(B)$ linear unitary maps 
\[
m_\odot: ({}^IL^2(B))\odot (L^2(B)^J) \to {}^I(L^2(B) \odot L^2(B))^J \to 
{}^I L^2(B)^J,
\]
where $\odot = \leftthreetimes$ or $\rightthreetimes$. 
We set 
\[ 
{}^Im^J = (m_\rightthreetimes)^* m_\leftthreetimes: 
({}^IL^2(B))\leftthreetimes (L^2(B)^J) \to 
({}^IL^2(B))\rightthreetimes (L^2(B)^J),  
\]
which is a unitary isomorphism in ${}_{M_I(B)}\cM_{M_J(B)}$. 


For ${}_AX_B$ and ${}_BY_C$, a unitary isomorphism 
$m_{X,Y}: X\leftthreetimes Y \to X\rightthreetimes Y$ is defined, with the help of 
projective-module realizations $u: X_B \to p\,{}^IL^2(B)_B$ and $v:{}_BY \to {}_BL^2(B)^J\,q$ as $B$-modules,  
by the commutativity of the diagram 
\[ 
\begin{CD}
X\leftthreetimes Y @>{u\leftthreetimes v}>> p\Bigl( ({}^I L^2(B))\leftthreetimes (L^2(B)^J) \Bigr)q\\
@V{m_{X,Y}}VV @VV{{}^Im^J}V\\
X\rightthreetimes Y @>>{u\rightthreetimes v}> p\Bigl( ({}^I L^2(B))\rightthreetimes (L^2(B)^J) \Bigr)q
\end{CD}.   
\]
Note here that 
$(p\,{}^IL^2(B)) \odot (L^2(B)^J\, q) =  p\Bigl( ({}^I L^2(B))\odot (L^2(B)^J) \Bigr)q$ 
for $\odot = \leftthreetimes$ or $\rightthreetimes$. 

By the bimodule linearity of ${}^Im^J$, 
$m_{X,Y}$ is $A$-$C$ linear and independent of the choice of projective-module realizations. 
Furthermore, $m_{X,Y}$ is natural in $X$ and $Y$ as well: 
For $f \in \Hom(X,X')$ and $g \in \Hom(Y,Y')$, the diagram 
\[ 
\begin{CD}
X \leftthreetimes Y @>{f\leftthreetimes g}>> X'\leftthreetimes Y'\\
@V{m_{X,Y}}VV @VV{m_{X',Y'}}V\\
X \rightthreetimes Y @>{f\rightthreetimes g}>> X'\rightthreetimes Y'
\end{CD}
\] 
is commutative. 

The following is immediate from the definition of $m_{X,Y}$. 

\begin{Lemma}\label{triangles}
Let $X = {}_AX_B$ be a W*-bimodule. Then the following diagrams commute. 
\[
\begin{xy}
(0,0) *{L^2(A)\leftthreetimes X}="A",
(40,0)*{L^2(A)\rightthreetimes X}="B",
(20,-20)*{X\,}="C",
\ar "A";"B"^{m_{L^2(A),X}}
\ar "A";"C"_{l_X^\leftthreetimes}
\ar "B";"C"^{l_X^\rightthreetimes}
\end{xy}, 
\hspace{10pt}
\begin{xy}
(0,0) *{X\leftthreetimes L^2(B)}="A",
(40,0)*{X\rightthreetimes L^2(B)}="B",
(20,-20)*{X\,}="C",
\ar "A";"B"^{m_{X,L^2(B)}}
\ar "A";"C"_{r_X^\leftthreetimes}
\ar "B";"C"^{r_X^\rightthreetimes}
\end{xy}. 
\]
\end{Lemma}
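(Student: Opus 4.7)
The plan is to reduce both triangles to the analogous identity for the matrix-enhanced maps ${}^Im^J$, by taking trivial projective realizations for the $L^2$-factors in the defining square of $m_{X,Y}$ and then invoking naturality of the unit isomorphisms.

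For the right-unit triangle I would choose $v = 1_{L^2(B)}$ (so $J=\{*\}$, $q=1$) together with any projective realization $u: X_B \to p\,{}^IL^2(B)_B$. The defining square for $m_{X,L^2(B)}$ then takes the form
\[
\begin{CD}
X \leftthreetimes L^2(B) @>{u \leftthreetimes 1}>> p\bigl( {}^IL^2(B) \leftthreetimes L^2(B) \bigr) \\
@V{m_{X,L^2(B)}}VV @VV{{}^Im}V \\
X \rightthreetimes L^2(B) @>{u \rightthreetimes 1}>> p\bigl( {}^IL^2(B) \rightthreetimes L^2(B) \bigr)
\end{CD}
\]
When $J=\{*\}$, the two maps $m_\leftthreetimes$, $m_\rightthreetimes$ used in the construction of ${}^Im$ are by definition the right unit isomorphisms $r^\leftthreetimes$, $r^\rightthreetimes$ for ${}^IL^2(B)$, so ${}^Im = (r^\rightthreetimes)^* r^\leftthreetimes$ and consequently $r^\rightthreetimes \circ {}^Im = r^\leftthreetimes$ on the right-hand column. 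Combining this with the naturality of $r^\leftthreetimes$ and $r^\rightthreetimes$ applied to $u$ (which yields $r^\odot_{p\,{}^IL^2(B)} \circ (u \odot 1) = u \circ r^\odot_X$ for $\odot = \leftthreetimes, \rightthreetimes$) and chasing once around the square delivers $r^\rightthreetimes_X \circ m_{X,L^2(B)} = r^\leftthreetimes_X$.

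The left-unit triangle is symmetric: take $u = 1_{L^2(A)}$ (so $I=\{*\}$, $p=1$) and any projective realization $v: {}_AX \to {}_AL^2(A)^J q$; then ${}^Im^J$ specializes to $(l^\rightthreetimes)^* l^\leftthreetimes$ on $L^2(A) \odot L^2(A)^J$, and naturality of $l^\odot$ in the second argument closes the diagram. The step I expect to require the most care is the identification $m_\odot = r^\odot$ (resp.\ $l^\odot$) when the matrix index set is a singleton; although this is implicit in the construction of ${}^Im^J$ given just above the lemma, I would want to verify it explicitly on simple tensors in a weakly dense subspace such as $\bigoplus_I \xi_i \otimes_B b$. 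A related technical point is that the definition of $m_{X,Y}$ is only known to be intrinsic via its independence of the projective-module realization, which must be invoked to legitimize the trivial choice on the $L^2$-side.
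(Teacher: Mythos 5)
Your argument is correct and is precisely the unpacking of what the paper asserts when it says the lemma is ``immediate from the definition of $m_{X,Y}$'': taking the trivial projective realization on the $L^2$-factor makes ${}^Im^J$ specialize to $(r^\rightthreetimes)^*r^\leftthreetimes$ (resp.\ $(l^\rightthreetimes)^*l^\leftthreetimes$), and naturality of the unit isomorphisms together with injectivity of $u$ (resp.\ $v$) closes the triangle. The two points you flag for care --- the identification $m_\odot = r^\odot$ (resp.\ $l^\odot$) for a singleton index set, and the legitimacy of the trivial realization via independence of the choice --- are exactly the right ones and both check out against the construction preceding the lemma.
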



\begin{Theorem}
The identity functor gives a monoidal equivalence between $\cM^\leftthreetimes$ and 
$\cM^\rightthreetimes$ with respect to the multiplicativity isomorphisms $\{ m_{X,Y}\}$, i.e., 
the diagram 
\[ 
\begin{CD}
(X\leftthreetimes Y) \leftthreetimes Z @>{m_{X,Y}\leftthreetimes 1}>> 
(X\rightthreetimes Y) \leftthreetimes Z
@>{m_{X\leftthreetimes Y,Z}}>> (X\rightthreetimes Y) \rightthreetimes Z\\ 
@V{a^\leftthreetimes}VV @. @VV{a^\rightthreetimes}V\\
X\leftthreetimes (Y \leftthreetimes Z) @>>{1\leftthreetimes m_{Y,Z}}> 
X\leftthreetimes (Y \rightthreetimes Z)
@>>{m_{X,Y\rightthreetimes Z}}> X\rightthreetimes (Y \rightthreetimes Z)
\end{CD}
\] 
is commutative for any composable triplets $X,Y,Z$ of W*-bimodules. 
\end{Theorem}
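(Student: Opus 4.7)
The plan is to reduce the hexagon to an identity on products of matrix corners of standard bimodules, where both $\leftthreetimes$ and $\rightthreetimes$ coincide canonically with a Hilbert-Schmidt extension of $L^2$, and then to read off the remaining identity from Lemma~\ref{triangles} together with the pentagon identities already built into $\cM^\leftthreetimes$ and $\cM^\rightthreetimes$.

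The first step is a naturality reduction. The definition of $m_{X,Y}$ through projective realizations $u\colon X_B \cong p\,{}^IL^2(B)_B$ and $v\colon {}_BY \cong {}_BL^2(B)^Jq$ (and likewise for $Y_C$ and ${}_CZ$), together with the already proved naturality of $m$ in both variables and the naturality of $a^\leftthreetimes$ and $a^\rightthreetimes$, allows one to transport the whole hexagon from $(X,Y,Z)$ to the triple of matrix corners of standard bimodules obtained from these realizations. The two realizations of $Y$---one as a left $B$-module and one as a right $C$-module---cause no conflict, because $m_{X,Y}$ and $m_{Y,Z}$ are independent of the chosen realization. It therefore suffices to verify the hexagon when $X$, $Y$, $Z$ are themselves corners of matricial standard bimodules.

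On such bimodules, each of $X\odot Y$ (for $\odot\in\{\leftthreetimes,\rightthreetimes\}$) is identified---via the unit isomorphisms $l^\odot$, $r^\odot$ and the matrix-extension compatibility of $m_\odot\colon L^2(B)\odot L^2(B)\to L^2(B)$---with a single Hilbert-Schmidt extension of $L^2$, and the associativity $a^\odot$ acts on the dense algebraic subspaces $XB(-1/2)\otimes_B YC(-1/2)\otimes_C Z$ and $X\otimes_B B(-1/2)Y\otimes_C C(-1/2)Z$ as tautological re-parenthesization. Under these identifications, each occurrence of $m$ in the hexagon reduces to a composition of the explicit unitaries ${}^Im^J$. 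Chasing an elementary tensor $x\otimes_B y\otimes_C z$ through both legs, one sees that commutativity amounts to a coherence identity for ${}^Im^J$ over the triple product, which follows directly from Lemma~\ref{triangles} (compatibility of $m$ with the unit $L^2(B)$) together with the pentagon identity already established in each of $\cM^\leftthreetimes$ and $\cM^\rightthreetimes$.

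The main obstacle will be the bookkeeping in this second step: one must keep consistent track of how a vector $y\in Y$ is simultaneously represented in $YC(-1/2)$ (for the $\leftthreetimes$-side) and in $B(-1/2)Y$ (for the $\rightthreetimes$-side), since these two pictures are bridged by $m_{X,Y}$ on one edge of the hexagon and by $m_{Y,Z}$ on another. Fixing a single corner-of-matrix realization of $Y$ makes both descriptions simultaneously transparent, after which commutativity extends from elementary tensors to the full tensor product by the same density argument used earlier for the pentagon and triangle identities.
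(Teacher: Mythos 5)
Your overall strategy (reduce to standard bimodules by projective realizations and naturality, then invoke Lemma~\ref{triangles}) points in the right direction, but there are two genuine gaps. First, the reduction overshoots: you propose to replace all three of $X$, $Y$, $Z$ by corners of matricial standard bimodules, but for the middle factor $Y={}_BY_C$ there is no single such realization --- ${}_BY\cong{}_BL^2(B)^Jq$ standardizes only the left $B$-action while $Y_C\cong p'\,{}^{I'}L^2(C)_C$ standardizes only the right $C$-action, and these are two unrelated unitaries of the underlying Hilbert space. Since the hexagon involves $\otimes_B$ on one side of $Y$ and $\otimes_C$ on the other, ``fixing a single corner-of-matrix realization of $Y$'' cannot make both descriptions transparent at once; this is exactly the bookkeeping problem you flag at the end, and it is not resolved by the independence of $m_{X,Y}$ and $m_{Y,Z}$ from the choice of realization. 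The reduction that works (and the one the paper uses) realizes only the outer factors, $X_B\cong p\,{}^IL^2(B)_B$ and ${}_CZ\cong{}_CL^2(C)^Jq$, leaving $Y$ untouched; naturality of $m$ and of the associativity isomorphisms then reduces the hexagon to the case $X=L^2(B)=I_B$ and $Z=L^2(C)=I_C$ with $Y={}_BY_C$ arbitrary.

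Second, the verification on the reduced case is not actually carried out, and one of the ingredients you name is not the right one: the pentagon identity, being a statement about quadruple products inside a single one of $\cM^\leftthreetimes$ or $\cM^\rightthreetimes$, plays no role here. Once $X=I_B$ and $Z=I_C$, every vertex of the hexagon maps down to the single object $Y$ via the unit isomorphisms $l^\odot$, $r^\odot$, and the hexagon is subdivided into cells that commute by (i) the triangle identity for unit isomorphisms, (ii) the naturality of the unit isomorphisms, and (iii) Lemma~\ref{triangles}, which is precisely the compatibility of $m$ with $l^\odot$ and $r^\odot$. So Lemma~\ref{triangles} is indeed the key input, as you guessed, but it must be combined with the unit (triangle) coherences rather than the pentagon, and the diagram subdivision on the reduced case is the substance of the proof rather than routine bookkeeping that can be deferred.
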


\begin{proof} 
By projective-module realizations of $X$ and $Z$ together with the naturality of relevant morphisms, 
the problem is reduced to the case $X = L^2(B)$ and $Z = L^2(C)$ with $Y = {}_BY_C$, 
whose validity can be seen 
from the following division of the diagram 
\[
\begin{xy}
(20,0) *{(I_B\leftthreetimes Y)\leftthreetimes I_C}="A",
(20,-50)*{I_B\leftthreetimes (Y\leftthreetimes I_C)}="B",
(70,-50)*{I_B\leftthreetimes (Y \rightthreetimes I_C)}="C",
(120,-50)*{I_B\rightthreetimes (Y \rightthreetimes I_C),}="D",
(70,0)*{(I_B\rightthreetimes Y) \leftthreetimes I_C}="E",
(120,0)*{(I_B\rightthreetimes Y) \rightthreetimes I_C}="F",
(45,-15)*{I_B\leftthreetimes Y}="G",
(95,-15)*{I_B \rightthreetimes Y}="H",
(95,-35)*{Y\rightthreetimes I_C}="I",
(45,-35)*{Y\leftthreetimes I_C}="J",
(70,-25)*{Y}="K",
(30,-20)*{\textcircled{\scriptsize1}}, 
(45,-25)*{\textcircled{\scriptsize2}},
(60,-8)*{\textcircled{\scriptsize2}},
(69,-20)*{\textcircled{\scriptsize3}},
(95,-8)*{\textcircled{\scriptsize3}},
(110,-20)*{\textcircled{\scriptsize1}},
(92,-25)*{\textcircled{\scriptsize2}},
(60,-42)*{\textcircled{\scriptsize2}},
(70,-31)*{\textcircled{\scriptsize3}},
(94,-44)*{\textcircled{\scriptsize3}},
\ar "A";"E"
\ar "F";"D"
\ar "A";"B"
\ar "C";"D"
\ar "B";"C"
\ar "E";"F"
\ar "G";"H"
\ar "A";"G"
\ar "B";"J"
\ar "E";"H" 
\ar "C";"I" 
\ar "F";"H"
\ar "D";"I"
\ar "B";"G" 
\ar "J";"K"
\ar "J";"I"
\ar "I";"K"
\ar "G";"K"
\ar "D";"H" 
\ar "H";"K"
\end{xy}
\]
where diagrams around $\textcircled{\scriptsize1}$ commute by 
the triangle identity for unit isomorphisms, 
diagrams around $\textcircled{\scriptsize2}$ commute by the naturality of unit isomorphisms 
and diagrams around $\textcircled{\scriptsize3}$ commute by Lemma~\ref{triangles}.  
\end{proof}



\section{Unitary Involutions}
Given a W*-bimodule ${}_AX_B$, the dual Hilbert space $X^*$ is naturally 
a ${}_B\text{W*}_A$ bimodule so that the operation of taking duals gives a contravariant functor 
${}_A\cM_B \to {}_B\cM_A$ with the operation on morphisms 
given by taking the transposed ${}^tf: Y^* \to X^*$ of $f \in \Hom(X,Y)$. 
With the notation $\xi^*$ ($\xi \in X$) to stand for a linear form 
$X \ni \xi' \mapsto (\xi|\xi')$ (the inner product being linear in the second variable), 
${}^tf$ is described by 
$\langle {}^tf\eta^*,\xi\rangle = (\eta|f\xi)$. 
The operation is then involutive (so-called self-duality on Hilbert spaces) 
in the sense that, if we denote the canonical isomorphism $(X^*)^* \cong X$ 
by $d_X: X \ni \xi \mapsto \xi^{**} = (\xi^*)^* \in X^{**}$, 
it is natural in $X$, satisfies ${}^td_X = d_{X^*}^{-1}$ and 
gives an equivalence between the iterated involution and the identity functor. 
 
As to the monoidal structures in $\cM$, the dualizing functor gives an anti-multiplicative equivalence 
between $\cM^\leftthreetimes$ and $\cM^\rightthreetimes$. 

To see this, we begin with some preparatory discussions. 
For $x \in XB(-1/2) = \Hom(L^2(B)_B,X_B)$ with 
$X = {}_AX_B$ a W*-bimodule in ${}_A\cM_B$, 
define its conjugate $\overline x \in \Hom({}_BL^2(B),{}_BX^*)$ by 
$\overline{x}(\beta) = (x(\beta^\natural))^*$ and set 
\[ 
x^\star = {\overline{x}}^\circ \in 
B(-1/2)X^* = \Hom({}_BL^2(B),{}_BX^*)^\circ. 
\] 
Recall that $\beta^\natural$ denotes the natural *-operation on $L^2(B)$.

Then the correspondence $x \mapsto x^\star$ gives 
a conjugate-linear isometric isomorphism between $X B(-1/2)$ and $B(-1/2) X^*$ in such a way that 
$(axb)^\star = b^*x^\star a^*$ and 
$[x',x]_B^* = {}_B[x^\star,(x')^\star]$ for $a \in A$, $b \in B$ and $x,x' \in XB(-1/2)$. 

We now introduce a natural (covariant) family of unitary morphisms 
$c_{X,Y}: Y^*\rightthreetimes X^* \to (X\leftthreetimes Y)^*$ in ${}_C\cM_A$ 
($Y = {}_BY_C$) by 
$c_{X,Y}(\eta^*\otimes_B x^\star) = (x\otimes_B \eta)^*$ ($x \in XB(-1/2)$, $\eta \in Y$), 
which is anti-multiplicative in the sense that the following hexagon diagram commutes. 
\[ 
\begin{CD}
(Z^*\rightthreetimes Y^*)\rightthreetimes X^* @>{c\otimes 1}>> 
(Y\leftthreetimes Z)^*\rightthreetimes X^* @>{c}>> 
(X\leftthreetimes(Y\leftthreetimes Z))^*\\
@V{a}VV & & @VV{{}^ta}V\\
Z^*\rightthreetimes(Y^*\rightthreetimes X^*) @>>{1\otimes c}> 
Z^*\rightthreetimes(X\leftthreetimes Y)^* @>>{c}> 
((X\leftthreetimes Y)\leftthreetimes Z)^*  
\end{CD}\ .
\]
To see this, let $x \in XB(-1/2)$, $y \in YC(-1/2)$ and $\zeta \in Z$. 
Then the above diagram is traced by  
\[ 
\begin{CD}
(\zeta^*\otimes_C y^\star)\otimes_B x^\star @>{c\otimes 1}>> 
(y\otimes_C \zeta)^*\otimes_B x^\star @>{c}>> 
(x\otimes_B(y\otimes_C \zeta))^*\\
@V{a}VV & & @VV{{}^ta}V\\
\zeta^*\otimes_C(y^\star \otimes_B x^\star) @>>> 
\zeta^*\otimes_C(x\otimes_B y)^\star @>>{c}> 
((x\otimes_B y)\otimes_C \zeta)^*  
\end{CD}
\]
and the hexagonal commutativity is reduced to the equality $(x\otimes_By)^\star = y^\star\otimes_Bx^\star$ 
in $((X\leftthreetimes Y)C(-1/2))^\star$, which is in turn checked by 
\[ 
\gamma (x\otimes y)^\star = ((x\otimes y)(\gamma^*))^* \mapsto 
y(\gamma^*)^*\otimes x^\star = \gamma (y^\star\otimes x^\star)
\quad 
\text{for $\gamma \in L^2(C)$.}
\] 

Being prepared, we define anti-multiplicativity isomorphisms 
$c_{X,Y}^\rightthreetimes: Y^*\rightthreetimes X^* \to (X\rightthreetimes Y)^*$ in $\cM^\rightthreetimes$ to be 
the composition $c_{X,Y}^\rightthreetimes = {}^tm_{X,Y}^{-1} c_{X,Y}$, which together with the duality 
isomorphisms $\{ d_X\}$ constitute a unitary involution on $\cM^\rightthreetimes$: 
As a composition of anti-multiplicative $c_{X,Y}$ and multiplicative ${}^tm_{X,Y}^{-1}$, 
$c_{X,Y}^\rightthreetimes$ is anti-multicative, i.e., the hexagon identity holds. 

For the commutativity of 
\[
\begin{CD}
X\rightthreetimes Y @>{d\rightthreetimes d}>> X^{**}\rightthreetimes Y^{**}\\
@V{d}VV @VV{c^\rightthreetimes}V\\
(X\rightthreetimes Y)^{**} @>>{{}^tc^\rightthreetimes}> (Y^*\rightthreetimes X^*)^*
\end{CD}\ ,
\]
we first describe $c^\rightthreetimes$ in terms of standard spaces. 
Given a projective-module realization $u: p{}^IL^2(B)_B \cong X_B$, 
its transposed map is composed with the row-vector extension of the canonical isomorphism 
$L^2(B)^* \cong L^2(B)$ to get the accompanied isomorphim $v:{}_BX^* \cong {}_BL^2(B)^Ip$. 
These are then combined with $c_{X,Y}$ and ${}^tm_{X,Y}$ to form a commutative diagram 
\[
\begin{CD}
Y^* \rightthreetimes X^* @>{c_{X,Y}}>> (X\leftthreetimes Y)^* @<{{}^tm_{X,Y}}<< (X\rightthreetimes Y)^*\\
@VVV @VVV @VVV\\
Y^* \rightthreetimes L^2(B)^Ip @>>> (p {}^IL^2(B) \leftthreetimes Y)^* @<<<
(p {}^I(L^2(B) \rightthreetimes Y))^*
\end{CD}\ ,
\]
where the bottom line is described by the correspondences
\[ 
\eta^*\otimes_B (b_i^*)p \longleftrightarrow (p(b_i)\otimes_B\eta)^* 
\longleftrightarrow (p(\beta_i\otimes_B y))^*
\]
with the relation $p(b_i\eta) = p(\beta_i y)$ in ${}^IY$ assumed at the second one. 
Thus, replacing $\eta^*\otimes_B(b_i^*)p$ with $(\sum_i y^\star \beta_i^\natural\otimes_B 1_i)p$, 
$c_{X,Y}^\rightthreetimes$ is specified by the commutativity of the diagram
\[
\begin{CD}
Y^* \rightthreetimes X^* @>{c_{X,Y}^\rightthreetimes}>> (X\rightthreetimes Y)^*\\
@V{1\rightthreetimes v}VV @VV{{}^t(u\rightthreetimes 1)}V\\
Y^* \rightthreetimes L^2(B)^Ip @>>> (p{}^IL^2(B)\rightthreetimes Y)^*
\end{CD} 
\] 
with the bottom line given by 
$\left( \sum_i y^\star \beta_i^\natural\otimes 1_i\right)p \mapsto
(p(\beta_i) \otimes_By)^*$. Here $y \in B(-1/2)Y$, $\beta_i \in L^2(B)$ and 
$1_i = \delta_i \in B^I = B(-1/2) L^2(B)^I$ denotes the canonical row basis. 

By symmetry, $c_{X,Y}^\rightthreetimes$ is also described in terms of 
a projective-module realization ${}_BY \cong {}_BL^2(B)^Jq$ by the correspondence 
\[ 
q{}^J L^2(B) \rightthreetimes X^* \ni 
q(\beta_j^\natural)\otimes_B x^\star \mapsto 
(\sum_j (x\beta_j\otimes 1_j)q)^* 
\in ((X\rightthreetimes L^2(B))^Jq)^*. 
\] 

Now the square identity for duality isomorphisms takes the form 
\[ 
\begin{CD}
p{}^IL^2(B) \rightthreetimes Y @>{1\rightthreetimes d}>> p {}^IL^2(B) \rightthreetimes Y^{**}\\
@V{d}VV @VV{c^\rightthreetimes}V\\
(p {}^I L^2(B) \rightthreetimes Y)^{**} @>>{{}^tc^\rightthreetimes}> (Y^* \rightthreetimes L^2(B)^Ip)^*
\end{CD}\ . 
\] 
Here canonical isomorphisms 
\[ 
(p{}^IL^2(B))^{**} \cong p {}^IL^2(B), 
\quad 
(p {}^IL^2(B))^* \cong L^2(B)^Ip
\]
are used at the right corners with $c^\rightthreetimes$ and ${}^tc^\rightthreetimes$ modified 
accordingly. 
The diagram is then traced by   
\[ 
\begin{CD}
p(\beta_i)\otimes_By @>>> p(\beta_i)\otimes_B y^{\star\star}\\
@VVV @VVV\\
(p(\beta_i)\otimes_B y)^{**} @>>> ((y^\star \beta_i^\natural \otimes_B 1)p)^*
\end{CD}
\]
and the commutativity holds. 

In this way, we have checked that $\{ c_{X,Y}^\rightthreetimes \}$ defines a unitary involution on 
$\cM^\rightthreetimes$. 
Likewise $c_{X,Y}^\leftthreetimes = c_{X,Y} m_{Y^*,X^*}$ gives a unitary involution on $\cM^\leftthreetimes$ 
so that $\{ m_{X,Y}\}$ intertwines these. 
As a conclusion, we have 

\begin{Theorem}
Anti-multiplicativity isomorphisms 
$\{ c_{X,Y}^\leftthreetimes\}$ and $\{ c_{X,Y}^\rightthreetimes\}$ define unitary involutions on
$\cM^\leftthreetimes$ and $\cM^\rightthreetimes$ respectively so that they are equivalent through 
the monoidal equivalence $\{ m_{X,Y}\}$. 
\end{Theorem}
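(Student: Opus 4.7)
The plan is to treat this theorem as a packaging statement: most of the substantive verifications have already been carried out in the paragraphs preceding it, and what remains is to organize them, produce one symmetric counterpart, and observe that the intertwining is automatic.

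First, I would record that $(\{c^\rightthreetimes_{X,Y}\}, \{d_X\})$ is a unitary involution on $\cM^\rightthreetimes$. The hexagon for $c^\rightthreetimes_{X,Y} = {}^tm^{-1}_{X,Y} c_{X,Y}$ follows from the hexagon already traced for the mixed anti-multiplicativity $c_{X,Y}$ via $(x\otimes_B y)^\star = y^\star \otimes_B x^\star$, combined with the fact that ${}^tm^{-1}$ converts the $\leftthreetimes$-associator appearing on the dual side into the $\rightthreetimes$-associator --- precisely the content of the monoidal equivalence theorem of the preceding section, transposed to dual objects. The duality square for $c^\rightthreetimes$ and $d$ has been verified in projective coordinates, and the final identity ${}^td_X = d^{-1}_{X^*}$ is inherent in Hilbert-space self-duality.

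Next, I would produce the analogous statements for $(\{c^\leftthreetimes_{X,Y}\}, \{d_X\})$ on $\cM^\leftthreetimes$ by the symmetric argument --- replacing the projective realization $u: p\,{}^IL^2(B) \cong X$ with the opposite-side realization ${}_BY \cong {}_BL^2(B)^J q$ reproduces each calculation with the roles of left and right (and of $\leftthreetimes$ and $\rightthreetimes$) interchanged. Equivalently, once the $\rightthreetimes$-involution is in place, transporting along the monoidal equivalence of the previous section yields a unitary involution on $\cM^\leftthreetimes$ whose anti-multiplicativity constraint is forced to coincide with $c^\leftthreetimes_{X,Y} = c_{X,Y} m_{Y^*,X^*}$.

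Finally, the intertwining by $\{m_{X,Y}\}$ reduces to the commutativity of
\[
\begin{CD}
Y^*\leftthreetimes X^* @>{c^\leftthreetimes_{X,Y}}>> (X\leftthreetimes Y)^*\\
@V{m_{Y^*,X^*}}VV @VV{{}^tm^{-1}_{X,Y}}V\\
Y^*\rightthreetimes X^* @>{c^\rightthreetimes_{X,Y}}>> (X\rightthreetimes Y)^*
\end{CD}
\]
which is automatic: both routes evaluate to ${}^tm^{-1}_{X,Y} c_{X,Y} m_{Y^*,X^*}$ by the very definitions. The main obstacle is therefore the duality square, which requires carefully tracking the interplay between the conjugation $x \mapsto x^\star$ on $XB(-1/2)$, the natural involution on $L^2(B)$, and the chosen projective realization --- this is why the preceding discussion spells out the trace explicitly. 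After that, the transport-of-structure argument removes any need to repeat the delicate computation on the other side, and everything else falls out of formal manipulations.
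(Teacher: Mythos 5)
Your proposal is correct and follows essentially the same route as the paper: the hexagon for $c^\rightthreetimes={}^tm^{-1}c$ as a composition of an anti-multiplicative and a multiplicative map, the duality square checked in projective-module coordinates, the $\leftthreetimes$ case handled symmetrically, and the intertwining by $\{m_{X,Y}\}$ observed to hold by definition. Your explicit remark that both composites in the intertwining square equal ${}^tm^{-1}_{X,Y}c_{X,Y}m_{Y^*,X^*}$, and that transport of structure along $\{m_{X,Y}\}$ forces $c^\leftthreetimes_{X,Y}=c_{X,Y}m_{Y^*,X^*}$, merely makes precise what the paper compresses into ``likewise.''
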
 




\begin{thebibliography}{99}
\bibitem{bdh}
M.~Baillet, Y.~Denizeau and J.F.~Havet, Indice d'une esp\'{e}rance conditionnelle, 
Compositio Math. 66 (1988), 199--236. 
\bibitem{bw}
J.W.~Barrett and B.W.~Westbury, Spherical categories, Adv. Math. 143 (1999), 357--375. 
\bibitem{conn}
A.~Connes, Noncommutative Geometry. Academic Press (1994). 
\bibitem{glr}
P.~Ghez, R.~Lima and J.E.~Roberts, W*-categories, Pacific J. Math., 120 (1985), 79--109. 
\bibitem{haag1}
U. Haagerup, The standard form of von Neumann algebras, Math. Scand. 37 (1975), 271--283.
\bibitem{macl}
S. MacLane, Categories for the working mathematician. Second edition. 
Springer-Verlag (1998).
\bibitem{sauv}
J.L.~Sauvageot, Sur le produit tensoriel relatif d'espaces de Hilbert, 
J. Operator Theory 9 (1983), 237--252. 
\bibitem{take}
M. Takesaki, Theory of Operator Algebras. II, Encyclopaedia of Mathematical Sciences, 125. 
Springer-Verlag (2003). 
\bibitem{yama1}
S. Yamagami, Algebraic aspects in modular theory, Publ. Res. Inst. Math. Sci. 28 (1992), 1075--1106.
\bibitem{yama2}
\underline{\phantom{Shigeru}}, Modular theory for bimodules, J. Funct. Anal. 125 (1994), 327--357. 
\bibitem{yama3}
\underline{\phantom{Shigeru}}, Frobenius duality in $C^*$-tensor categories, 
J. Operator Theory 52 (2004), 3--20. 
\bibitem{yama4}
\underline{\phantom{Shigeru}}, Notes on operator categories, J. Math. Soc. Japan, 59 (2007), 541--555.
\bibitem{yama5}
\underline{\phantom{Shigeru}}, Around trace formulas in non-commutative integration, 
arXiv:1701.04127. 
\end{thebibliography}
\end{document}